\documentclass[12pt, reqno]{amsart}

\setlength{\tabcolsep}{20pt}

\usepackage{amsmath, amsthm, amssymb, stmaryrd}
\usepackage{hyperref}
\usepackage{enumerate}
\usepackage{url}

\usepackage{xcolor}

\usepackage[centering, margin={1in, 0.5in}, includeheadfoot]{geometry}

\usepackage{fancyvrb}

\newtheorem{thm}{Theorem}
\newtheorem{lemma}[thm]{Lemma}

\newtheorem{conj}[thm]{Conjecture}

\theoremstyle{definition}

\theoremstyle{remark}


\def\alp{{\alpha}} 
\def\bet{{\beta}}  
\def\gam{{\gamma}} 
\def\del{{\delta}} 

\def\sig{{\sigma}}

\def\le{\leqslant} \def\ge{\geqslant}

\def\d{{\,{\rm d}}}

\def \sig{{\sigma}}

\def \bN {\mathbb N}
\def \N {\mathbb N}

\def \bR {\mathbb R}
\def \R  {\mathbb R}
\def \bZ {\mathbb Z}


\def \cH {\mathcal H}

\def \dim {\mathrm{dim}}

\DeclareMathOperator{\dimh}{\dim_H}

\begin{document}
\title{A note on dyadic approximation in Cantor's set}
\subjclass[2020]{Primary 11J83; Secondary 11J71, 28A78, 28A80, 42A16}
\keywords{Diophantine approximation, Cantor set, Fourier analysis}
\author{Demi Allen, Simon Baker, Sam Chow \and Han Yu}

\address{College of Engineering, Mathematics and Physical Sciences, 
University of Exeter, Harrison Building, 
North Park Road, Exeter, EX4 4QF, United Kingdom}
\email{d.d.allen@exeter.ac.uk}

\address{School of Mathematics, Watson Building, University of Birmingham, Edgbaston,
Birmingham B15 2TT,
United Kingdom}
\email{s.baker.2@bham.ac.uk}

\address{Mathematics Institute, Zeeman Building, University of Warwick, Coventry CV4 7AL, United Kingdom}
\email{Sam.Chow@warwick.ac.uk}

\address{Department of Pure Mathematics and Mathematical Statistics, Centre for Mathematical Sciences, Cambridge, CB3 0WB, UK}
\email{hy351@cam.ac.uk} 

\begin{abstract} 
We consider the convergence theory for dyadic approximation in the middle-third Cantor set, $K$, for approximation functions of the form $\psi_{\tau}(n) = n^{-\tau}$ ($\tau \ge 0$). In particular, we show that for values of $\tau$ beyond a certain threshold we have that almost no point in $K$ is dyadically $\psi_{\tau}$-well approximable with respect to the natural probability measure on~$K$. This refines a previous result in this direction obtained by the first, third, and fourth named authors (arXiv, 2020).
\end{abstract}

\maketitle
\thispagestyle{empty}

\section{Introduction}

Throughout this note, we write $K$ for the middle-third Cantor set and denote by $\mu$ the natural measure on $K$. We recall that $K$ consists of the real numbers $x \in [0,1]$ which have a ternary expansion consisting only of 0's and 2's, and that its Hausdorff dimension is 
\[\dimh{K} = \frac{\log{2}}{\log{3}} =: \gamma.\]
The natural measure $\mu$ on $K$ is the Hausdorff $\gamma$-measure restricted to $K$, which is a probability measure as $\cH^{\gamma}{(K)}=1$. For more information on Hausdorff dimension and Hausdorff measures, we refer the reader to \cite{Falconer}.

In \cite{ACY}, the first, third and fourth named authors discussed the problem of approximating elements of $K$ by rationals with denominators that are a power of two: that is, \emph{dyadic rationals}. The study of Diophantine approximation in the Cantor set was suggested by Mahler \cite{Mah1984}, and has since been an active subject of research \cite{Bug2008, BD2016, KL, LSV2007, Sch2021, SW2019, Wei2001, Yu}. Our methods realised the dyadic approximation problem as a manifestation of Furstenberg's ``times two, times three'' phenomenon \cite{Furstenberg67,Furstenberg70}.

For $\psi: \bR \to [0,\infty)$ and $y \in \bR$, define
\[
W_2(\psi,y) = \{ x \in \bR: \| 2^n x - y \| < \psi(n) \: \text{for infinitely many } n \in \N \}.
\]
Here, for $x \in \R$, we write $\|x\|$ to denote the Euclidean distance from $x$ to the nearest integer. In analogy with Khintchine's theorem \cite{Khintchine24}, Velani conjectured that if $\psi$ is monotonic then
\[
\mu(W_2(\psi,0)) = \begin{cases}
0,&\text{if }
\displaystyle \sum_{n = 1}^\infty \psi(n) < \infty, \\
\\
1, &\text{if }
\displaystyle \sum_{n = 1}^\infty \psi(n) = \infty,
\end{cases}
\]
see \cite[Conjecture 1.2]{ACY}.
The two parts of such a dichotomy are commonly referred to as the convergence and divergence theories of metric Diophantine approximation, respectively. The second named author \cite{Bak} stated the following natural generalisation of Velani's conjecture, dropping the monotonicity condition and introducing an inhomogeneous shift. The latter relates the problem to distribution modulo $1$, and also enables one to recast it in terms of shrinking targets \cite{HV1995}.

\begin{conj}[\cite{Bak}] If $y \in \bR$, then
\[
\mu(W_2(\psi, y)) = \begin{cases}
0,&\text{if } \displaystyle \sum_{n = 1}^\infty \psi(n) < \infty, \\
\\
1, &\text{if } \displaystyle \sum_{n = 1}^\infty \psi(n) = \infty.
\end{cases}
\]
\end{conj}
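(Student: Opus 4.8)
The plan is to write $A_n = A_n(y) = \{x \in K : \|2^n x - y\| < \psi(n)\}$, so that $W_2(\psi,y) = \limsup_{n\to\infty} A_n$, and to attack both halves of the dichotomy through estimates on the single measures $\mu(A_n)$ and the pairwise correlations $\mu(A_m \cap A_n)$. The engine throughout is Fourier analysis on $K$. Writing $e(t) = e^{2\pi i t}$ and expanding the $1$-periodic indicator of the $\psi(n)$-neighbourhood of $\bZ$ into its Fourier series, one obtains
\[
\mu(A_n) = \sum_{k \in \bZ} c_k(n)\, e(-ky)\, \widehat{\mu}(2^n k), \qquad c_0(n) = 2\psi(n), \quad |c_k(n)| \le \min\{2\psi(n),\ (\pi|k|)^{-1}\},
\]
where $\widehat{\mu}(\xi) = \int_K e(-\xi x)\,d\mu(x)$ and $\widehat\mu(0)=1$. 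The self-similarity of $\mu$ under $x \mapsto x/3$ and $x \mapsto x/3 + 2/3$ yields the product formula $|\widehat{\mu}(\xi)| = \prod_{j\ge 1} |\cos(2\pi \xi/3^j)|$. Thus the main term of $\mu(A_n)$ is $c_0(n) = 2\psi(n)$, and the inhomogeneous shift $y$ enters only through unimodular factors $e(-ky)$, so every bound below is automatically uniform in $y$.

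For the convergence half, assume $\sum_n \psi(n) < \infty$. Then $\sum_n c_0(n) = 2\sum_n \psi(n) < \infty$, so by the first Borel--Cantelli lemma it suffices to control the error $E_n := \sum_{k\ne 0} c_k(n)\, e(-ky)\,\widehat\mu(2^n k)$ and to prove $\sum_n |E_n| < \infty$. Bounding $|E_n| \le \sum_{k\ne 0}|c_k(n)|\,|\widehat\mu(2^n k)|$, splitting at $|k|\le \psi(n)^{-1}$, and using the two regimes of $|c_k(n)|$, the decisive input is a bound showing that $|\widehat\mu(2^n k)|$ is small on average over $k$ and summable over $n$. Here lies the crux: $|\widehat\mu(2^n k)| = \prod_{j\ge1} |\cos(2\pi\, 2^n k/3^j)|$ is governed by the base-$3$ digits of $2^n k$, equivalently by the orbit of $2^n$ modulo powers of $3$, which is precisely Furstenberg's ``times two, times three'' phenomenon. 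I expect this to be the \emph{main obstacle}: a clean pointwise bound $|\widehat\mu(2^n k)| \le 2^{-\delta n}$ is not available in full strength, so one must instead extract cancellation on average. A sufficiently strong estimate valid for \emph{all} summable $\psi$ is exactly what seems to be missing, which is why the present note secures convergence only for $\tau$ beyond a threshold, where the error can be absorbed.

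For the divergence half, assume $\sum_n \psi(n) = \infty$. The strategy is the second-moment method. Expanding the pair correlation gives
\[
\mu(A_m \cap A_n) = \sum_{k,l \in \bZ} c_k(m)\, c_l(n)\, e(-(k+l)y)\, \widehat\mu(2^m k + 2^n l),
\]
whose $(k,l)=(0,0)$ term is $4\psi(m)\psi(n) = \mu(A_m)\mu(A_n)(1+o(1))$. Granting a quasi-independence bound $\mu(A_m\cap A_n) \ll \mu(A_m)\mu(A_n)$ for $m\ne n$, together with $\sum_n \mu(A_n) \gg \sum_n \psi(n) = \infty$, the Chung--Erd\H{o}s (Kochen--Stone) inequality yields $\mu(\limsup A_n) > 0$. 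This correlation bound again reduces to decay of $\widehat\mu$ along the two-parameter frequencies $2^m k + 2^n l$, a harder two-scale version of the same obstruction flagged above, and I expect it to be the principal difficulty on this side.

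Finally, to pass from positive measure to full measure one needs a zero--one law. Because $W_2(\psi,y)$ is \emph{not} a tail event in the ternary digits of $x$ (altering a single digit perturbs $\{2^n x\}$ for infinitely many $n$), Kolmogorov's law does not apply. Instead I would prove the correlation estimates in localised form, uniformly over triadic cylinders $C$: establishing $\mu(A_m\cap A_n\mid C) \ll \mu(A_m\mid C)\mu(A_n\mid C)$ together with $\sum_n \mu(A_n\mid C) = \infty$, so that the Chung--Erd\H{o}s argument gives $\mu(\limsup A_n \mid C) \ge c > 0$ with $c$ independent of $C$. Since $\mu$ is Ahlfors--David regular, hence doubling, the Lebesgue density theorem then forces $\mu(\limsup A_n) \in \{0,1\}$ and excludes $0$, yielding full measure. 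The whole scheme is thus conditional on two Fourier-decay inputs (one-scale for convergence, two-scale and localised for divergence), and securing these uniformly across all admissible $\psi$ is the heart of the conjecture and the obstacle that the present note only partially surmounts.
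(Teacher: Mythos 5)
You have not proved the statement, and neither does the paper: this is an open conjecture (Velani's conjecture as generalised by Baker), which the paper records precisely because it is unresolved. The paper's own contribution, Theorem \ref{MainThm}, establishes only the special case of the convergence half for $\psi = \psi_\tau$ with $\tau \ge 1/\gam - 0.01$, and cites \cite{Bak} for the divergence half with $\tau \le 0.01$. So there is no paper proof to compare against, and your proposal, by your own admission, is a programme conditional on exactly the two Fourier-decay inputs that constitute the open problem. Concretely, on the convergence side your scheme requires $\sum_n \sum_{1 \le |k| \le \psi(n)^{-1}} \min\{\psi(n), |k|^{-1}\}\, |\hat\mu(2^n k)| < \infty$ for \emph{every} summable $\psi$; the best available input (the bound from \cite[Lemma 2.2]{Bak} used in this paper) gives $|\hat\mu(t2^n)| \ll N^{-\bet_1}$ only for $n$ outside an exceptional set $B_N \subset [N,2N]$ of size $\ll N^{\bet_2+\alp}$, and on $B_N$ one only has the trivial estimate $\mu(A_n^y(\sig_n)) \ll \psi(n)^\gam$. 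Since $\gam < 1$, there are summable $\psi$ (e.g.\ supported on the exceptional sets, or with $\sum_n \psi(n)^\gam = \infty$) for which this contribution cannot be absorbed, which is precisely why the paper must impose the constraint \eqref{constraint} on $\tau$ and work with $\psi_\tau$. Your one-scale expansion also has a technical wrinkle the paper's route avoids: the sharp indicator of the $\psi(n)$-neighbourhood does not have an absolutely convergent Fourier series, so the claimed identity for $\mu(A_n)$ should be replaced by Selberg- or Fej\'er-type majorants (this is essentially what \cite[Theorem 4.1]{Yu} packages, and the paper then transfers between the coarse scale $\del_n$ and the fine scale $\sig_n$ via the counting Lemma \ref{ShiftDrop} rather than expanding at scale $\sig_n$ directly).

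The divergence half of your proposal has the same character of gap, plus one extra. The two-scale decay of $\hat\mu(2^m k + 2^n l)$ needed for quasi-independence is unproven in the required uniformity, and Chung--Erd\H{o}s then yields only $\mu(\limsup A_n) > 0$. Your proposed upgrade to full measure is the most serious soft spot: you correctly observe that $W_2(\psi,y)$ is not a tail event and that no zero--one law is known ($\mu$ is not invariant under $x \mapsto 2x \bmod 1$, so ergodicity arguments are unavailable), but the localised correlation bounds uniform over all triadic cylinders that you substitute for it are again exactly the missing estimate, now in strengthened form --- the density-theorem step is fine once you have them, but assuming them begs the question. In short: your diagnosis of where the difficulty lies is accurate and matches the structure of the partial results in \cite{ACY} and \cite{Bak}, but every load-bearing step of the proposal is an unproven hypothesis, so this is a reduction of the conjecture to the conjecture's known core, not a proof.
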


Let us now consider the problem at the level of the exponent. For $\tau \ge 0$ and $n \in \bN$, define
$\psi_\tau(n) = n^{-\tau}$. Plainly $\mu(W_2(\psi_0,y)) = 1$ for any $y$. By \cite[Theorem 1.5]{ACY}, we have 
\begin{equation} \label{PreviousConv}
\mu(W_2(\psi_\tau,0)) = 0 \qquad (\tau \ge 1/\gam).
\end{equation}
It follows from the recent work of the second named author \cite{Bak} that if $y \in \bR$ then
\[
\mu(W_2(\psi_\tau,y)) = 1 \qquad (\tau \le 0.01),
\]
refining the progress on the divergence side made in \cite{ACY}. The purpose of this note is to establish the following sharpening and generalisation of \eqref{PreviousConv}.

\begin{thm} \label{MainThm} Let $\tau \ge 1/\gam - 0.01$ and let $y \in \bR$. Then $\mu(W_2(\psi_\tau,y)) = 0$.
\end{thm}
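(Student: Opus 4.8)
The plan is to realise $W_2(\psi_\tau,y)$ as a $\limsup$ set and apply the convergence part of the Borel--Cantelli lemma. Writing $\Delta_n=\psi_\tau(n)=n^{-\tau}$ and
\[
A_n=\{x\in\bR:\|2^nx-y\|<\Delta_n\},
\]
we have $W_2(\psi_\tau,y)=\limsup_{n\to\infty}A_n$, so it suffices to prove $\sum_{n}\mu(A_n)<\infty$. Geometrically, $A_n$ is the union of the intervals $B\!\left(2^{-n}(m+y),\,2^{-n}\Delta_n\right)$ over $m\in\bZ$, so only those $m$ for which $m+y$ lies within $\Delta_n$ of the dilated Cantor set $2^nK$ contribute to $\mu(A_n)$. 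Note that since $\tau>1$ the prediction from Lebesgue measure, $\sum_n\Delta_n=\sum_n n^{-\tau}$, already converges; the entire difficulty is to control the excess concentration of $\mu$ on these intervals.

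First I would record the baseline bound $\mu(A_n)\ll\Delta_n^{\gam}$ underlying \eqref{PreviousConv}. Covering $K$ by the $2^{\lceil n\gam\rceil}$ ternary cylinders of length at most $2^{-n}$ and dilating by $2^n$ shows that $2^nK$ is contained in $\ll2^{n\gam}$ intervals of length at most $1$; hence the number of $m$ contributing to $A_n$ is $\ll2^{n\gam}$, uniformly in $y$. Each contributing interval has $\mu$-measure $\ll(2^{-n}\Delta_n)^{\gam}$ by the upper regularity $\mu(B(z,r))\ll r^{\gam}$, so
\[
\mu(A_n)\ll 2^{n\gam}\bigl(2^{-n}\Delta_n\bigr)^{\gam}=\Delta_n^{\gam}=n^{-\tau\gam}.
\]
This is summable precisely when $\tau\gam>1$, i.e.\ $\tau>1/\gam$, and—being built only from a covering count and Frostman regularity—it is insensitive to $y$, already removing the homogeneity restriction in \eqref{PreviousConv}.

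To push the threshold below $1/\gam$ I would extract a small power saving from the arithmetic tension between the dilation by $2$ and the base-$3$ self-similarity of $\mu$. Writing $T(x)=2x\bmod1$, we have $\mu(A_n)=(T^n_*\mu)\bigl(\{t:\|t-y\|<\Delta_n\}\bigr)$, and choosing a nonnegative trigonometric majorant $F_n$ of the indicator of $\{s:\|s\|<\Delta_n\}$, with coefficients concentrated on $|k|\ll\Delta_n^{-1}$ and $|\widehat{F_n}(k)|\ll\Delta_n$, gives
\[
\mu(A_n)\le\int_K F_n(2^nx-y)\,d\mu(x)=\sum_{k}\widehat{F_n}(k)\,e^{-2\pi iky}\,\hat\mu(k2^n),
\]
where $\hat\mu(\xi)=\int_K e^{2\pi i\xi x}\,d\mu(x)$ satisfies $|\hat\mu(\xi)|=\prod_{j\ge1}|\cos(2\pi\xi3^{-j})|$. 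The point is that $2$ is a primitive root modulo $3^j$ for every $j$, so the residues $2^n\bmod3^j$ run through all units as $n$ varies; consequently the factors $|\cos(2\pi k2^n3^{-j})|$ are bounded away from $1$ for a positive proportion of the relevant scales, and averaging over $n$ turns this into a genuine gain. Quantifying it, one obtains $\sum_n\mu(A_n)\ll\sum_n n^{-\tau(\gam+\delta)}$ for some explicit $\delta>0$; a short computation shows $(1/\gam-0.01)(\gam+\delta)>1$ as soon as $\delta>0.01\gam^2\approx0.004$, whence $\sum_n\mu(A_n)<\infty$ and Theorem~\ref{MainThm} follows.

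The crux, and the main obstacle, is this averaged power saving. Because $\mu$ is not a Rajchman measure, $\hat\mu$ does not tend to $0$, so bounding the Fourier sum termwise is far too lossy, and a pointwise estimate $|\hat\mu(k2^n)|\le\rho^{\,n}$ is hopeless—it would amount to normality of $2^n$ in base $3$, which is open. The saving must therefore be extracted on average over $n$, and the delicate points are to make the equidistribution of $2^n\bmod3^j$ quantitative and uniform in the modulus, to retain control uniformly in $k$ up to $|k|\ll\Delta_n^{-1}$, and to interface this cleanly with the truncation provided by the majorant $F_n$.
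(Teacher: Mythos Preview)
Your framework is right: write $W_2(\psi_\tau,y)=\limsup A_n$, use Borel--Cantelli, and try to beat the covering bound $\mu(A_n)\ll\Delta_n^{\gam}$ via Fourier analysis of $\hat\mu(k2^n)$. The baseline bound you record is exactly the one the paper uses on its ``bad'' set of $n$. However, the central step is only asserted, not carried out, and the way you have set it up creates an obstacle that the paper avoids.

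You propose to majorise at the \emph{fine} scale $\Delta_n=n^{-\tau}$, so the Fourier sum runs over $|k|\ll n^{\tau}$, and then to recover a saving by averaging $|\hat\mu(k2^n)|$ over $n$. The only quantitative input available here is Baker's large-sieve-type lemma, which says that for all but $O(N^{\bet_2})$ integers $n\in[N,2N]$ one has $\max_{1\le|t|\le T}|\hat\mu(t2^n)|\le C N^{-\bet_1}$, with $\bet_1\approx0.078$, $\bet_2\approx0.922$. Even granting this uniformly in $k$, summing over $|k|\ll n^{\tau}$ with $\tau>1$ costs a factor $n^{\tau}$, and $n^{\tau-\bet_1}$ swamps the gain: you get $\mu(A_n)\ll n^{-\tau}(1+n^{\tau}\cdot n^{-\bet_1})\asymp n^{-\bet_1}$, which is useless. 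So the ``averaged power saving'' you flag as the crux does not follow from the available Fourier input at this scale; something else is needed, and your sketch does not supply it. (Also, there is no reason the saving should take the multiplicative form $n^{-\tau(\gam+\delta)}$; the argument that actually works produces an additive improvement $n^{-(\tau\gam+c)}$ with $c$ independent of $\tau$.)

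What the paper does instead is a two-scale argument. One introduces an auxiliary \emph{coarse} scale $\del_n=n^{-\alp}$ with a small $\alp$ (in the paper $\alp=0.05<\bet_1$). At that scale the majorant has only $O(n^{\alp})$ nonzero frequencies, so on the good set $G_N$ Baker's bound gives honestly $\mu(A_n^y(\del_n))\ll\del_n$. The passage from the coarse to the fine scale is not Fourier-analytic at all: it is a combinatorial transference lemma (the inhomogeneous analogue of \cite[Lemma~2.2]{ACY}) comparing counts of level-$N$ and level-$M$ Cantor endpoints in $A_n^y(\sig)$ and $A_n^y(2\del)$, which together with the measure formula $\mu(A_n^y(\sig))\asymp 2^{-N}|C_N\cap A_n^y(\sig)|$ yields
\[
\mu(A_n^y(\sig_n))\ll(\sig_n/\del_n)^{\gam}\,\mu(A_n^y(\del_n))\ll\del_n^{1-\gam}\sig_n^{\gam}=n^{-(\tau\gam+\alp(1-\gam))}
\]
for $n\in G_N$, while the bad set contributes $\ll N^{\bet_2+\alp-\tau\gam}$. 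Both exponents exceed $1$ (resp.\ $0$) once $\tau\ge1/\gam-0.01$. Your proposal is missing precisely this coarse-scale detour and the counting/transference lemma; without them the Fourier machinery alone does not close the gap.
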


This makes progress towards the convergence part of Velani's conjecture. In \cite{ACY}, it was shown conditionally that
\[
\mu(W_2(\psi_\tau,0)) = 
\begin{cases}
0, &\text{if } \tau > 1, \\
1, &\text{if } \tau \le 1,
\end{cases}
\]
which constitutes a conditional solution to Velani's conjecture at the level of the exponent. Theorem \ref{MainThm} is unconditional.

We finish this section by briefly discussing the significance of the exponent $1/\gam$. By a comparatively simple argument, one can see that if $\tau > 1/\gam$ and $y \in \bR$ then $\mu(W_2(\psi_\tau, y)) = 0$, see the proof of \cite[Proposition 1.4]{ACY}. In \cite{ACY}, we attained the exponent $1/\gam$ in establishing \eqref{PreviousConv}. Thus, as explained in the introduction of that article, dyadic approximation in $K$ behaves very differently to triadic approximation in $K$, the latter having been thoroughly investigated by Levesley, Salp and Velani \cite{LSV2007}. Theorem \ref{MainThm} extends the admissible range for the exponent beyond this threshold.

\subsection*{Notation.} For complex-valued functions $f$ and $g$, we write $f \ll g$ or $f = O(g)$ if $|f| \le C |g|$ pointwise, for some constant $C > 0$.

\subsection*{Funding.} Han Yu was supported by the European Research Council under the European Union’s Horizon 2020 research and innovation programme (grant agreement No. 803711), and indirectly by Corpus Christi College, Cambridge.

\section{Proof of Theorem \ref{MainThm}}

We now prove Theorem \ref{MainThm}. 
Set $C>0$ to be the constant $C_1$ arising from \cite[Lemma 2.2]{Bak}, and let $N \in \bN$ be large. For $n \in [N,2N] \cap \bZ$, put
\[
\sig_n = n^{-\tau},
\qquad
\del_n = n^{-\alp},
\]
where
\[
\alp = 0.05, \qquad \bet_1 = 0.078,
\qquad \bet_2 = 0.922
\]
and
\begin{equation} \label{constraint}
\tau \gam > \max \{1 - \alp(1-\gam), \bet_2 + \alp \}.
\end{equation}
For $k \in \bZ$, denote
\[
\hat \mu(k) = \int_0^1 e(-kx) \d \mu(x).
\]
Write $G_N$ for the set of integers $n \in [N,2N]$ such that
\[
\max_{1 \le |t| \le 2/\del_{2N}} |\hat \mu(t 2^n)| \le CN^{-\bet_1},
\]
and let $B_N$ be its complement in $[N,2N] \cap \bZ$. By \cite[Lemma 2.2]{Bak} 
and the union bound, we have 
\[
|B_N| \ll N^{\bet_2 + \alp}.
\]

For $n \in \bN$ and $\sig > 0$, denote
\[
A_n^y(\sig) = \{ x \in \bR: \| 2^n x - y \| < \sig \},
\]
so that
\[
W_2(\psi_\tau,y)) = \limsup_{n \to \infty} A_n^y(\sig_n).
\]
By the first Borel--Cantelli lemma \cite[Lemma 1.2]{Harman}, it suffices to prove that 
\begin{equation} \label{BC}
\sum_{n=1}^\infty \mu(A_n^{y}(\sig_n)) < \infty.
\end{equation}
For $n \in B_N$, we use the following estimate, whose proof follows straightforwardly from the argument in \cite[\S 2.1]{ACY}.

\begin{lemma} Let $y \in \bR$. Then
\[
\mu(A_n^y(\sig_n)) \ll \sig_n^\gam \qquad (n \in \bN).
\]
\end{lemma}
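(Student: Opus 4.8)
The plan is to exploit the self-similarity of $\mu$ at the scale matched to $2^{-n}$, rather than estimating $A_n^y(\sig_n)$ interval by interval. First I would record the geometry: since $\|2^n x - y\| < \sig_n$ forces $|x - (y+j)2^{-n}| < \sig_n 2^{-n}$ for some $j \in \bZ$, the set $A_n^y(\sig_n)$ is a union of intervals of length $2\sig_n 2^{-n}$ whose centres are spaced $2^{-n}$ apart. I would also invoke the standard upper regularity of the Cantor measure, namely $\mu(I) \ll |I|^\gam$ for every interval $I$ (see \cite{Falconer}); one may assume $\sig_n$ is small, as otherwise $\mu(A_n^y(\sig_n)) \le 1 \ll \sig_n^\gam$ trivially. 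Applying this bound to each of the $\asymp 2^n$ thin intervals separately only yields $\mu(A_n^y(\sig_n)) \ll \sig_n^\gam 2^{n(1-\gam)}$, which is far too weak; the point is to recover the missing factor from the structure of $\mu$.

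The key step is to pass to the level-$m$ cylinder decomposition of $K$ with $m = \lfloor n\gam \rfloor$, which satisfies $3^m \le 2^n < 3^{m+1}$ because $3^\gam = 2$, so that $3^{-m} \asymp 2^{-n}$. Each cylinder $I_w$ ($w \in \{0,2\}^m$) has length $3^{-m}$, satisfies $\mu(I_w) = 2^{-m}$, and carries a rescaled copy of $\mu$: writing $S_w$ for the similarity of ratio $3^{-m}$ with $S_w([0,1]) = I_w$, the restriction of $\mu$ to $I_w$ is $2^{-m}$ times the push-forward of $\mu$ under $S_w$, so $\mu(S_w(E)) = 2^{-m}\mu(E)$ for every Borel set $E$. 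Because the cylinder length $3^{-m}$ and the spacing $2^{-n}$ are comparable, each cylinder $I_w$ meets only $O(1)$ of the thin intervals comprising $A_n^y(\sig_n)$. For such a thin interval $J$, the rescaled set $S_w^{-1}(J \cap I_w)$ is an interval of length $\ll 3^m \sig_n 2^{-n} \ll \sig_n$, so upper regularity gives $\mu(J \cap I_w) = 2^{-m}\mu(S_w^{-1}(J \cap I_w)) \ll 2^{-m}\sig_n^\gam$.

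Summing over the $O(1)$ thin intervals meeting a fixed cylinder yields $\mu(A_n^y(\sig_n) \cap I_w) \ll 2^{-m}\sig_n^\gam$, and summing over all $2^m$ cylinders gives
\[
\mu(A_n^y(\sig_n)) \ll 2^m \cdot 2^{-m}\sig_n^\gam = \sig_n^\gam,
\]
as required; the shift $y$ only translates the centres of the thin intervals and so affects neither the counting nor the regularity estimate, giving uniformity in $y$. The main obstacle, and indeed the whole content of the lemma, is precisely this cancellation: one must choose the cylinder scale $m \asymp n\gam$ so that the factor $2^m$ from the number of cylinders exactly offsets the self-similar mass factor $2^{-m}$, using the relation $2 = 3^\gam$. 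Getting the scale-matching $3^{-m} \asymp 2^{-n}$ right—and hence the $O(1)$ bound on the number of thin intervals per cylinder—is the crux; everything else is routine, and the argument parallels that of \cite[\S 2.1]{ACY}.
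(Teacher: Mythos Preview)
Your argument is correct and is precisely the self-similarity / scale-matching argument the paper has in mind: the paper does not spell out a proof but simply defers to \cite[\S 2.1]{ACY}, and your cylinder decomposition at level $m=\lfloor n\gam\rfloor$ with $3^{-m}\asymp 2^{-n}$, followed by upper regularity inside each cylinder, is exactly that argument carried out in detail (with the harmless inhomogeneous shift $y$). Nothing further is needed.
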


In the generic case $n \in G_n$, we are able to procure a stronger estimate by transferring data from the coarse scale $\del_n$ to the fine scale $\sig_n$. By \cite[Theorem 4.1]{Yu}, we have
\[
\mu(A_n^y(\del_n)) \ll \del_n \left(
1 + \sum_{ 1 \le |t| \le 2/\del_n} |\hat \mu(t2^n)| \right) \qquad (n \in \bN).
\]
As $\alp < \bet_1$, we find that if $n \in G_N$, then
\begin{align}\label{A_delta bound}
\mu(A_n^y(\del_n)) \ll \del_n.
\end{align}

To pass between the two scales $\delta_n$ and $\sigma_n$, we require an inhomogeneous analogue of \cite[Lemma 2.2]{ACY}. Its statement and proof are based upon the iterative construction of $K$, which we now briefly recall, see \cite[\S 2]{ACY} for further details. For $N \in \bN$, the $N^{\mathrm{th}}$ level in the construction of the Cantor set, which we denote by $K_N$, comprises $2^N$ intervals of length~$3^{-N}$. The left endpoints of these intervals form the set $L_N$ of rationals $a/3^N$ such that $a \in [0,3^N]$ is an integer whose ternary expansion contains only the digits $0$ and $2$, and the right endpoints form the set $R_N = \{ 1 - x: x \in L_N \}$. We write $C_N = L_N \cup R_N$. The following is an inhomogeneous analogue of \cite[Lemma 2.2]{ACY}.

\begin{lemma} \label{ShiftDrop}
Let $n,M,N \in \bN$ and $\sig, \del \in \bR$ satisfy
\[
0 < \sig < \del \le 1,
\qquad
\frac{\sig}{2^{n+5}} \le 3^{-N} \le \frac{\sig}{2^n} \le 3^{-M} \le \frac{\del}{2^n},
\]
and let $y \in \bR$. Then
\[
|C_N \cap A_n^y(\sig)| \ll |C_M \cap A_n^y(2\del)|.
\]
\end{lemma}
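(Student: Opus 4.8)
The plan is to exhibit an explicit map from $C_N \cap A_n^y(\sig)$ to $C_M \cap A_n^y(2\del)$ that is boundedly-many-to-one; the desired cardinality bound then follows immediately. The map exploits the nesting $K_N \subseteq K_M$ of the Cantor construction, and the shift $y$ enters only through the centres of the target intervals, so it plays no essential role --- this is why the inhomogeneous statement should be no harder than its homogeneous counterpart in \cite[Lemma 2.2]{ACY}.

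Concretely, I would define a map $\phi \colon C_N \cap A_n^y(\sig) \to C_M$ by sending a level-$N$ endpoint $q$ to the left endpoint $p = \phi(q) \in L_M \subseteq C_M$ of the unique level-$M$ interval containing it; this is well defined because $q \in K_N \subseteq K_M$ and the level-$M$ intervals are pairwise disjoint. First I would check that $\phi$ takes values in $A_n^y(2\del)$. Since $0 \le q - p \le 3^{-M}$ and $3^{-M} \le \del/2^n$, we have $|2^n p - 2^n q| \le 2^n 3^{-M} \le \del$. Choosing $m \in \bZ$ with $|2^n q - y - m| < \sig$ and using $\sig \le \del$ (a consequence of $\sig/2^n \le 3^{-M} \le \del/2^n$), the triangle inequality gives $|2^n p - y - m| < \del + \sig \le 2\del$, so that $p \in A_n^y(2\del)$ as required.

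It then remains to bound the fibres of $\phi$, which is the crux of the argument. Fix $p \in C_M$ and consider the level-$M$ interval $[p, p + 3^{-M}]$, of length $3^{-M} \le 1/2^n$. The set $A_n^y(\sig)$ is a union of intervals of radius $\sig/2^n$ centred at the points $(y+k)/2^n$ ($k \in \bZ$), which are spaced $1/2^n \ge 3^{-M}$ apart; hence only $O(1)$ of these target intervals meet our level-$M$ interval. Each such target interval has length $2\sig/2^n \le 2^6 \cdot 3^{-N}$ by the hypothesis $\sig/2^{n+5} \le 3^{-N}$, and since distinct level-$N$ endpoints are at least $3^{-N}$ apart it can contain only $O(1)$ points of $C_N$. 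Combining these two facts, each fibre $\phi^{-1}(p)$ has size $O(1)$, whence $|C_N \cap A_n^y(\sig)| \ll |C_M \cap A_n^y(2\del)|$.

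The main obstacle is precisely this fibre bound: it is the step that genuinely uses the two-sided pinching $\sig/2^{n+5} \le 3^{-N} \le \sig/2^n$, which forces the fine scale $3^{-N}$ to match the target radius $\sig/2^n$ up to the bounded factor $2^5$. Without the lower bound on $3^{-N}$ the target intervals could swallow arbitrarily many level-$N$ endpoints and the map would fail to be boundedly-many-to-one; the upper constraint $3^{-M} \le \del/2^n$ together with $\del \le 1$ is what keeps each level-$M$ interval short enough to meet only $O(1)$ targets.
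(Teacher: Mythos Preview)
Your proof is correct and follows essentially the same approach as the paper's. Both send a level-$N$ endpoint to its level-$M$ ancestor and verify that this map is $O(1)$-to-one; the paper expresses this via the ternary digit decomposition $a = 3^{N-M}a_1 + a_2$ after a symmetry reduction to $L_N$, while you phrase it geometrically through the nesting $K_N \subseteq K_M$, but the two arguments coincide in substance (your ``$O(1)$ target intervals meet the level-$M$ cell'' is the paper's ``$O(1)$ choices for $b$ given $a_1$'', and your ``$O(1)$ points of $C_N$ per target interval'' is its ``$O(1)$ choices for $a_2$ given $a_1,b$'').
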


\begin{proof} We imitate the proof of \cite[Lemma 2.2]{ACY}. By symmetry, it suffices to prove that
\begin{equation} \label{left}
|L_N \cap A_n^y(\sig)| \ll |L_M \cap A_n^y(2\del)|.
\end{equation}
Suppose $x \in L_N \cap A_n^y(\sig)$. Then $x = a/3^N$ for some integer $a \in [0,3^N)$ whose ternary expansion contains only the digits $0$ and $2$. Further, there exists an integer $b \in [0,2^n]$ such that $|x-(b+y)/2^n| < \sig/2^n$. Therefore $|L_n \cap A_n^y(\sig)|$ is bounded above by the number of integer solutions $(a,b)$ to the inequality
\[
\left| \frac a{3^N} - \frac{b+y}{2^n} \right| < \frac{\sig}{2^n}
\]
such that $a \in [0,3^N)$, $b \in [0,2^n]$, and each ternary digit of $a$ is $0$ or $2$.

We decompose $a = a_1 a_2$, where $a_1$ represents the right $M$ ternary digits of $a$ and $a_2$ represents the remaining $N-M$ digits. This reveals that $|L_n \cap A_n^y(\sig)|$ is bounded above by the number of integer solutions $(a_1,a_2,b)$ to
\begin{equation} \label{new}
\left| \frac{3^{N-M}a_1 + a_2}{3^N} - \frac {b+y} {2^n} \right| < \frac{\sig}{2^n}
\end{equation}
such that 
\[
0 \le a_1 < 3^M,
\qquad
0 \le a_2 < 3^{N-M},
\qquad
0 \le b \le 2^n,
\]
and the ternary digits of $a_1,a_2$ are all $0$ or $2$. As
\begin{equation} \label{triangle}
\left| \frac{a_1}{3^M}  - \frac{b+y}{2^n} \right| \le \left|
\frac{a_1}{3^M} + \frac{a_2}{3^N} - \frac{b+y}{2^n} \right| + \frac{a_2}{3^N} < \frac{\sig}{2^n} + \frac1{3^M} \le \frac2{3^M},
\end{equation}
we must have $a_1/3^M \in A_n^y(2\del)$ for any such solution.

Given $a_1$, the inequality \eqref{triangle} forces $b/2^n$ to lie in the interval of length $4/3^M$, and so there are at most $O(1)$ possibilities for $b$. Next, suppose we are given $a_1$ and $b$. Then, by \eqref{new}, the integer $a_2$ is forced to lie in the interval of length $3^N \sig 2^{1-n}$ centred at $3^N ((b+y)2^{-n} - 3^{N-M}a_1)$, and so there are at most $O(1)$ solutions $a_2$ to \eqref{new}. Finally, since $a_1/3^M \in L_M \cap A_n^y(2 \del)$, we conclude that there are $O(|L_M \cap A_n^y(2 \del)|)$ solutions in total. This confirms \eqref{left} and completes the proof of the lemma.
\end{proof}

Now \cite[Lemma 2.1]{ACY} and Lemma \ref{ShiftDrop}, together with \eqref{A_delta bound}, yield
\[
\mu(A_n^y(\sig_n)) \ll \frac{(\sig_n/2^n)^\gam}{(\del_n/2^n)^\gam} \mu(A_n^y(\del_n)) \ll \del_n^{1-\gam} \sig_n^\gam \qquad (n \in G_N).
\]
Hence 
\begin{align*}
\sum_{n=N}^{2N} \mu(A_n^y(\sig_n))
&\ll \sum_{n=N}^{2N} \del_n^{1-\gam} \sig_n^\gam + \sum_{n \in B_N} \sig_n^\gam
\\ &\ll \sum_{n=N}^{2N} \frac1{n^{\tau \gam + \alp(1-\gam)}} + N^{\bet_2 + \alp - \tau \gam}.
\end{align*}
In view of \eqref{constraint}, and noting that we can write
\[\sum_{n=1}^{\infty}{\mu(A_n^y(\sigma_n))} \le \sum_{k=0}^{\infty}{\sum_{n=2^k}^{2^{k+1}}{\mu(A_n^y(\sigma_n))}},\]
we finally have
\eqref{BC}, which completes the proof of Theorem \ref{MainThm}.

\providecommand{\bysame}{\leavevmode\hbox to3em{\hrulefill}\thinspace}

\end{document}